\documentclass[a4paper,12pt,reqno]{amsart}

\usepackage{amsthm}
\usepackage{amsmath,amssymb,latexsym,amsfonts,mathrsfs}
\usepackage{bm}

\usepackage{color}
\usepackage{comment}
\usepackage{mathtools}
\usepackage{fancyhdr}
\usepackage[top=30truemm,bottom=30truemm,left=20truemm,right=20truemm]{geometry}
\usepackage{enumitem}
\usepackage{enumerate}

\newcommand{\R}{\mathbb{R}}
\newcommand{\C}{\mathbb{C}}
\newcommand{\N}{\mathbb{N}}

\newcommand{\SCR}[1]{{\mathscr #1}}

\newcommand{\CAL}[1]{{\mathcal #1}}

\theoremstyle{plain}
\newtheorem{Thm}{{\bf Theorem}}[section]

\newtheorem{Lem}[Thm]{{\bf Lemma}}
\newtheorem{Prop}[Thm]{{\bf Proposition}}

\theoremstyle{definition}
\newtheorem{Def}[Thm]{{\bf Definition}}
\newtheorem{Rem}[Thm]{{\bf Remark}}

\newcounter{Exami}

\allowdisplaybreaks[2]

\makeatletter
 \def\address#1#2{\begingroup
 \noindent\parbox[t]{7.8cm}{%
 \small{\scshape\ignorespaces#1}\par\vskip0ex
 \noindent\small{\itshape E-mail}%
 \/: #2\par\vskip4ex}\hfill%
 \endgroup}%
\makeatother

\begin{document}
\fontencoding{T1}\selectfont
\title[Asymptotic behavior of DNLS]{Asymptotic behavior for the damped Schr\"{o}dinger equation with nonlinear dissipation.}
\author{Takahisa Inui, Shun Takizawa}
\begin{abstract}
We consider large time asymptotics of solutions to the damped Schr\"{o}dinger equation with the nonlinear dissipation in the mass-subcritical case. 
We prove that the optimal $L^2$-decay rate of the nonlinear solution coincides with that of the corresponding linear solution even in the presence of nonlinear dissipation. 
Moreover, we give the optimal convergence rate for scattering for any power in the mass-subcritical regime. 
\end{abstract}
\subjclass[2020]{Primary 35Q55; Secondary 35B40, 35P25}
\maketitle
\section{Introduction} 
In this paper, we study the following damped nonlinear Schr\"{o}dinger equation:
\begin{align}\label{CP1}
&i\partial_{t}u+\Delta u+ia u=\lambda |u|^{p-1} u, \hspace{3mm}(t,x)\in [0,\infty) \times \R^n
\end{align}
with the initial data $u|_{t=0}=u_0$, where $a>0$, $p>1$ and $\lambda\in \C$. The damped nonlinear Schr\"{o}dinger equation
appears in various areas of nonlinear optics, plasma physics, and fluid
dynamics, and has been studied from both mathematical and physical viewpoints. 
See \cite{Goldman, MTsutsumi1, MTsutsumi2, Fibich, Ohta-Todorova, Mohamad, Hamouda}.
A feature of the damped Schr\"{o}dinger equation is that the solution to (\ref{CP1}) obeys 
\begin{align} \label{decay}
\| u(t)\|_{L^2}=e^{-at} \|u(0)\|_{L^2}
\end{align} 
for all $t\in[0,\infty)$ when $\lambda\in \R$.
Thus the damping term $iau$ of (\ref{CP1}) causes the mass of solutions to decay.

There are numerous works on large time behavior for the standard nonlinear Schr\"{o}dinger equation, corresponding to (\ref{CP1}) with $\operatorname{Im}\lambda=0$ and $a=0$.
For results on $L^2$-scattering and $H^1$-scattering, we refer to \cite{Tsutsumi-Yajima, Ozawa, Barab, Hayashi-Naumkin, Kita-Wada, Kita, Kita-Ozawa, Hayashi-Kawahara} and \cite{YTsutsumi1, Ginibre-Velo, Nakanishi, Bourgain, Dodson, BGTV} and references therein, respectively.

Next, we review  previous results in the case with $\operatorname{Im}\lambda<0$ and $a=0$.
The condition $\operatorname{Im}\lambda<0$ makes the $L^2$-norm of the solution decrease as shown by the following identity:
\begin{align*}
\|u(t)\|_{L^2}^{2}=\|u(0)\|_{L^2}^{2}+2 \operatorname{Im}\lambda \int_{0}^{t} \|u(s)\|_{L^{p+1}}^{p+1}
ds
\end{align*}
for the solution $u(t)$ to (\ref{CP1}) with $a=0$.
 Kita--Shimomura \cite{Kita-Shimomura} showed that $\displaystyle\lim_{t\to\infty} \|u(t)\|_{L^2}=0$ if $n=1$ and $1<p\leq3$.  
After that, Hayashi--Li--P. Naumkin \cite{Hayashi-Li-Naumkin1} proved
$\displaystyle\lim_{t\to\infty} \|u(t)\|_{L^2}=0$ provided $1<p< 1+2/n$.
For the exponent $p=1+2/n$, we refer to Hayashi--Li--P. Naumkin \cite{Hayashi-Li-Naumkin2} and  Cazenave--I. Naumkin \cite{Cazenave-Naumkin}. 
On the other hand, Sato \cite{Sato} proved that the solution of (\ref{CP1}) with the initial condition $u|_{t=0}=u_{0}\ne0$ has a positive $L^2$-lower bound when $p>1 +2/n$. Therefore we see that the nonlinear dissipation $\lambda |u|^{p-1}u$ affects large time behavior of the solution to (\ref{CP1}) with $a=0$ only when $p\leq 1+2/n$. 

In addition, for the case with $\lambda\in\R$ and $a>0$ we have the exponential decay (\ref{decay}).
We clarify in Theorem \ref{thm2} below that only the linear dissipation contributes to the decay of the solution to (\ref{CP1}) for  $1<p<1+4/n$ when both linear and nonlinear dissipation are present.

By setting $v(t)=e^{at}u(t)$, we can reduce (\ref{CP1}) with the initial condition $u|_{t=0}=u_{0}$ to the following Cauchy problem:
\begin{align} \label{CP2}
\begin{cases}
&i\partial_{t}v+\Delta v=\lambda e^{-a(p-1)t} |v|^{p-1} v, \hspace{3mm}(t,x)\in [0,\infty) \times \R^n,
\\&v|_{t=0}=v_{0}.
\end{cases}
\end{align} 
Then we note $u_0=v_0$.

In order to address large time asymptotics for (\ref{CP2}), we first give the global well-posedness result.

\begin{Prop}\label{prop1}
Let $v_0\in L^2(\R^n)$ and $1<p<1+4/n$. Then there exists a unique global solution $v\in C([0,\infty); L^2)\cap L^{q}([0,\infty); L^r)$ of (\ref{CP2}) with $\operatorname{Im}\lambda\leq0$, where $(q,r)=(\frac{4(p+1)}{n(p-1)}, p+1)$. Moreover, this global solution satisfies
\begin{align}\label{eq3}
 \|v(t)\|_{L^2}^2=\|v_0\|_{L^2}^2 +2 \operatorname{Im}\lambda \int_{0}^{t} e^{-a(p-1)s} \|v(s)\|_{L^{p+1}}^{p+1}
ds
\end{align}
and there exists a scattering state $\phi\in L^2(\R^n)$ such that
\begin{align*}
\lim_{t\to\infty} e^{(p-1)at} \left( \|v(t)-e^{it\Delta}\phi\|_{L^2}+ \|v(s)-e^{is\Delta}\phi\|_{L^q([t,\infty);L^r)}\right)=0.
\end{align*}
\end{Prop}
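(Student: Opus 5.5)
The argument follows Tsutsumi's $L^2$ theory for mass-subcritical NLS, with the factor $e^{-a(p-1)t}$ supplying the time integrability needed for global bounds and scattering. Write (\ref{CP2}) in Duhamel form
\begin{align*}
v(t)=e^{it\Delta}v_0-i\lambda\int_0^t e^{i(t-s)\Delta}e^{-a(p-1)s}|v(s)|^{p-1}v(s)\,ds .
\end{align*}
The pair $(q,r)=(\frac{4(p+1)}{n(p-1)},p+1)$ is admissible since $\frac2q=\frac{n(p-1)}{2(p+1)}=n(\frac12-\frac1r)$, and $q>2$ for $p<1+4/n$. Strichartz estimates together with Hölder's inequality in space and time give, on any interval $I=[T,T+\tau]$,
\begin{align*}
\Bigl\|\int_T^t e^{i(t-s)\Delta}e^{-a(p-1)s}|v|^{p-1}v\,ds\Bigr\|_{L^\infty(I;L^2)\cap L^q(I;L^r)}
\lesssim e^{-a(p-1)T}\tau^{\theta}\|v\|_{L^q(I;L^r)}^{p},\qquad \theta=1-\frac{p+1}{q}>0 .
\end{align*}
The exponent $\theta$ is positive exactly because $p<1+4/n$. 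A contraction in $C(I;L^2)\cap L^q(I;L^r)$ then yields local existence and uniqueness, with a time of existence $\tau$ depending only on $\|v(T)\|_{L^2}$. Unconditional uniqueness in the stated class follows from the same difference estimate.

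Next I derive the mass identity (\ref{eq3}). For $H^1$ (or smoother) data I multiply (\ref{CP2}) by $\bar v$, integrate, and take imaginary parts; this gives $\frac{d}{dt}\|v\|_{L^2}^2=2\operatorname{Im}\lambda\, e^{-a(p-1)t}\|v\|_{L^{p+1}}^{p+1}$. I pass to general $L^2$ data by approximation, using the continuous dependence in $C(I;L^2)\cap L^q(I;L^r)$ coming from the contraction. Justifying this regularization is the routine but technical step. Since $\operatorname{Im}\lambda\le 0$, the identity gives $\|v(t)\|_{L^2}\le\|v_0\|_{L^2}$, so $\tau$ is uniform in time and iteration gives a global solution.

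The main obstacle is obtaining a \emph{global} bound on $\|v\|_{L^q([0,\infty);L^r)}$, because the local bound alone only gives growth in $t$. I would use the damping factor as follows. Choose $T_0$ so large that $e^{-a(p-1)T_0}$ is small compared with a power of $\|v_0\|_{L^2}$. On $[0,T_0]$ the solution has finite $L^qL^r$ norm by finitely many local steps. On $[T_0,\infty)$ I cannot use $\tau^\theta$, so instead I split the time integral into unit intervals $[T_0+k,T_0+k+1]$. The weight $e^{-a(p-1)s}$ is summable over $k$. On each unit interval, Hölder gives the bound $\|v\|^p_{L^q(I_k;L^r)}\le X^p$, where $X=\|v\|_{L^q([T_0,T);L^r)}$, so the Duhamel term is bounded by $C e^{-a(p-1)T_0}X^p$. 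This yields the continuity inequality $X\le C\|v_0\|_{L^2}+C e^{-a(p-1)T_0}X^p$. A standard bootstrap, using continuity of $X$ in $T$, then gives $X\le 2C\|v_0\|_{L^2}$ uniformly in $T$.

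For scattering I set
\begin{align*}
\phi=v_0-i\lambda\int_0^\infty e^{-is\Delta}e^{-a(p-1)s}|v|^{p-1}v\,ds .
\end{align*}
The same unit-interval Strichartz argument on $[t,\infty)$ gives
\begin{align*}
\|v(t)-e^{it\Delta}\phi\|_{L^2}+\|v-e^{i\cdot\Delta}\phi\|_{L^q([t,\infty);L^r)}
\lesssim \sum_{k\ge0}e^{-a(p-1)(t+k)}\|v\|_{L^q([t+k,t+k+1];L^r)}^{p}.
\end{align*}
Multiplying by $e^{(p-1)at}$, the right-hand side becomes $\sum_k e^{-a(p-1)k}\|v\|^p_{L^q([t+k,t+k+1];L^r)}$. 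Each term tends to $0$ as $t\to\infty$ because $v\in L^q([0,\infty);L^r)$, and the terms are dominated by $e^{-a(p-1)k}\|v\|^p_{L^q([0,\infty);L^r)}$. Dominated convergence therefore gives the stated limit.
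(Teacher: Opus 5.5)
Your proposal is correct and follows essentially the paper's route: Tsutsumi's $L^2$ theory for global existence and the mass identity, a continuity argument for the global $L^q([0,\infty);L^r)$ bound, and the Duhamel tail estimate $\lesssim e^{-a(p-1)t}\|v\|_{L^q([t,\infty);L^r)}^p$ for the rate. Your unit-interval splitting with the summable factor $e^{-a(p-1)k}$ is just a discrete version of the weighted H\"older estimate in Lemma \ref{Lem2}, and defining $\phi$ directly replaces the paper's Cook--Kuroda argument. One terminological point: your difference estimate gives uniqueness in $C([0,\infty);L^2)\cap L^q_{\mathrm{loc}}([0,\infty);L^r)$, which is exactly what is claimed, but that is conditional rather than ``unconditional'' uniqueness.
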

This proof is given in Appendix A below.
 
If $\operatorname{Im} \lambda=0$, then it immediately follows from (\ref{eq3}) that the conservation law $\|v(t)\|_{L^2}=\|v_0\|_{L^2}$ holds. On the other hand, in the case $\operatorname{Im} \lambda<0$, we see that $\|v(t)\|_{L^2}$ is monotone decreasing similar to the case $a=0$. 
We reveal that the solution does not vanish as $t\to\infty$ for all $1<p<1+4/n$ even if $\operatorname{Im} \lambda<0$ in the following theorem. 
\begin{Thm} \label{thm2}
Let $v$ be the solution of (\ref{CP2}) constructed in Proposition \ref{prop1} with $\operatorname{Im}\lambda<0$ and the initial data $v_0\ne 0$.

Then there exists a constant $C>0$ such that 
\begin{align*}
\|v(t)\|_{L^2}>C
\end{align*}
for all $t\in [0,\infty)$. 
\end{Thm}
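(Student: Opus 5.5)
Since $\|v(t)\|_{L^2}$ is monotone decreasing by (\ref{eq3}), it is enough to show that $\lim_{t\to\infty}\|v(t)\|_{L^2}>0$. By Proposition \ref{prop1} this limit equals $\|\phi\|_{L^2}$, where $\phi$ is the scattering state. So the task reduces to proving $\phi\neq 0$ whenever $v_0\neq0$. We argue by contradiction and assume $\phi=0$.

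\textbf{Step 1: a tail estimate.} Under the assumption $\phi=0$, Proposition \ref{prop1} gives
\[
e^{(p-1)at}\bigl(\|v(t)\|_{L^2}+\|v\|_{L^q([t,\infty);L^r)}\bigr)\to0 .
\]
By Duhamel's formula from time $t$ to $\infty$, we have $v(t)=i\lambda\int_t^\infty e^{i(t-s)\Delta}e^{-a(p-1)s}|v|^{p-1}v(s)\,ds$. Apply Strichartz estimates with the dual admissible pair $(q',r')$ and H\"older in time, exactly as in the Appendix A local theory. Writing $\theta:=1-(p+1)/q>0$ (this is where $p<1+4/n$ enters), we get
\[
\|v(t)\|_{L^2}+\|v\|_{L^q([t,\infty);L^r)}
\le C|\lambda|\,\bigl\|e^{-a(p-1)s}\bigr\|_{L^{1/\theta}(t,\infty)}\,\|v\|_{L^q([t,\infty);L^r)}^{p}.
\]
The time factor is $\lesssim e^{-a(p-1)t}$. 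Set $X(t):=\|v\|_{L^q([t,\infty);L^r)}$. Then $X(t)\le C e^{-a(p-1)t}X(t)^p$.

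\textbf{Step 2: $v$ vanishes for large times.} Since $X(t)\to0$ and $p>1$, for large $t$ we have $Ce^{-a(p-1)t}X(t)^{p-1}<1$. The inequality of Step 1 then forces $X(t)=0$, and hence $\|v(t)\|_{L^2}=0$ for all large $t$.

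\textbf{Step 3: backward uniqueness.} It remains to propagate $v(T)=0$ back to $t=0$. The solution with data $0$ at time $T$ is identically zero. Solving (\ref{CP2}) backward from $T$ is a well-posed problem: the local theory in Appendix A is time-reversible on bounded intervals, since the factor $e^{-a(p-1)t}$ is bounded above and below on $[0,T]$. Uniqueness in $C([0,T];L^2)\cap L^q([0,T];L^r)$ for the Duhamel formulation based at $T$ then gives $v\equiv0$ on $[0,T]$, so $v_0=0$, a contradiction.

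Equivalently, one can avoid backward solving. Let $M(t)=\|v(t)\|_{L^2}^2$. From (\ref{eq3}) and the Gagliardo--Nirenberg inequality one would need $\|v\|_{L^{p+1}}^{p+1}$ to be controlled by a power of $M$, which requires $H^1$ information; this is why the Strichartz-based uniqueness route is preferable.

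The main obstacle is Step 3. We must make sure the uniqueness class used in Proposition \ref{prop1} applies to the backward problem with $L^2$ data on a finite interval. I would handle this by redoing the contraction on short intervals $[T-\delta,T]$, with $\delta$ chosen so that the $L^q L^r$ norm of $v$ is small there, together with continuity of $v$ in $C([0,T];L^2)$. Iterating finitely many steps down to $0$ then completes the argument.
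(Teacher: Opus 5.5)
Your argument is correct. It is essentially the paper's \emph{alternative} proof in Appendix C. Assuming the mass tends to $0$ (equivalently $\phi=0$), the Duhamel formula from $\infty$ and Lemma \ref{Lem2} give $\|v\|_{X(t)}\lesssim e^{-a(p-1)t}\|v\|_{X(t)}^{p}$, so $v$ vanishes for large $t$. Local uniqueness backward in time then propagates this down to $v_0=0$; either your uniform step $\delta$ from the factor $\delta^{\theta}$ or the paper's infimum $T_1$ does this. Only local uniqueness is used there. That is fine, even though with $\operatorname{Im}\lambda<0$ the backward problem need not be solvable on all of $[0,T]$ for general data at time $T$.

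The paper's main proof in Section 3 takes a different route, and it shows that your side remark is mistaken. Avoiding backward solving through the mass identity does not require $H^1$ information. You need not control $\|v(t)\|_{L^{p+1}}$ pointwise in time by the mass; it is enough to control the time-integrated loss in (\ref{eq3}). H\"older in time (Lemma \ref{lem31}, using $q>r=p+1$, i.e.\ $p<1+4/n$) and the small-data Strichartz bound (\ref{inq33}) give
\[
\int_T^t e^{-a(p-1)\tau}\|v(\tau)\|_{L^{p+1}}^{p+1}\,d\tau\lesssim\|v\|_{L^q([T,t];L^r)}^{p+1}\lesssim\|v(T)\|_{L^2}^{p+1}.
\]
So the mass lost after time $T$ is superlinear in $\|v(T)\|_{L^2}$. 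This yields
\[
\|v(t)\|_{L^2}^2\ge\|v(T)\|_{L^2}^2\bigl(1-C\|v(T)\|_{L^2}^{p-1}\bigr)\ge\tfrac12\|v(T)\|_{L^2}^2
\]
for all $t>T$, once $T$ is chosen by the intermediate value theorem with $\|v(T)\|_{L^2}$ positive but below the small-data threshold.

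Comparing the two routes:
\begin{itemize}
\item The paper's main route is purely $L^2$-based and needs no uniqueness or backward well-posedness. It is also quantitative: the mass never drops below $2^{-1/2}$ times its value on entering the small-data regime.
\item Your route is shorter but only qualitative. In exchange, it establishes the structural fact that the only solution with scattering state $\phi=0$ is $v\equiv0$.
\end{itemize}
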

The statement of Theorem \ref{thm2} means that the solution of (\ref{CP1}) has the lower bound: $\|u(t)\|_{L^2}\geq C e^{-at}$ with some constant $C>0$. This implies that the nonlinear dissipation does not affect the decay of the solution when the linear damping presents for all $1<p<1+4/n$. 
To the best of our knowledge, Theorem \ref{thm2} is the first result on the decay estimate for the solution to the nonlinear Schr\"{o}dinger equation with
both the linear damping and the nonlinear dissipation.

Moreover,  in Theorem \ref{thm1} below,  we give the asymptotic behavior of the solution to (\ref{CP1}) more details.
For the case $\lambda\in \R$, large time asymptotic behavior for (\ref{CP1}) has been studied by Inui \cite{Inui} and Aloui--Jbari--Tayachi \cite{Aloui}.
They proved that if  there exists a scattering state $\phi\in H^1$ such that $\|e^{at}u(t)-e^{it\Delta}\phi\|_{H^1}\to0$ as $t\to\infty$  for the solution $u\in C([0,\infty); H^1)$ to (\ref{CP1}) with $u(0)\in H^1$, then its convergence rate is obtained as follows:
\begin{align} \label{Inq2}
\|u(t)-e^{-at}e^{it\Delta}\phi\|_{H^1}=o\left(e^{-apt}\right)
\end{align} 
with $1<p<1+\frac{4}{n-2}$, where we regard $1+\frac{4}{n-2}$ as $\infty$ when $n=1, 2$. In recent work, Takagi--Takizawa \cite{Takagi-Takizawa} proved that the optimal convergence rate of $L^2$-scattering is $t^{-n(p-1)/2} e^{-apt}$ when the scattering state belongs to the modulation space $M^{1,1}(\R^n)$ and $p \in 2\N+1$. 

In the present paper, we prove that optimal convergence rate of $L^2$-scattering is $t^{-n(p-1)/2} e^{-apt}$ when the scattering state belongs to $L^{\frac{2p}{2p-1}}(\R^n)$ with $1<p<1+4/n$ in Theorem \ref{thm1} below.

\begin{Thm}\label{thm1}
Let $v$ be the solution of (\ref{CP2}) stated in Proposition \ref{prop1} with $v_0\ne 0$. If the scattering state $\phi$  belongs to not only $L^2(\R^n)$ but also $L^{\frac{2p}{2p-1}}(\R^n)$, then there exist constants $C_2>C_1>0$ and large $T>0$ such that 
\begin{align*}
C_1 \left(t^{-n/2}e^{-at}\right)^{p-1}\leq \|v(t)-e^{it\Delta}\phi\|_{L^2} \leq C_2 \left(t^{-n/2}e^{-at}\right)^{p-1}
\end{align*}
for all $t>T$. 
\end{Thm}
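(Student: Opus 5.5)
We write the difference through the Duhamel formula from infinity. Since $v$ scatters to $\phi$ in the sense of Proposition \ref{prop1},
\begin{align*}
v(t)-e^{it\Delta}\phi = i\lambda\int_t^\infty e^{i(t-s)\Delta} e^{-a(p-1)s}|v(s)|^{p-1}v(s)\,ds .
\end{align*}
We split this as $R_1(t)+R_2(t)$. The main term $R_1$ is obtained by replacing $v(s)$ with $e^{is\Delta}\phi$. The remainder is
\begin{align*}
R_2(t)=i\lambda\int_t^\infty e^{i(t-s)\Delta}e^{-a(p-1)s}\bigl(|v|^{p-1}v-|e^{is\Delta}\phi|^{p-1}e^{is\Delta}\phi\bigr)(s)\,ds .
\end{align*}
The plan is to show that $\|R_1(t)\|_{L^2}$ is comparable to $(t^{-n/2}e^{-at})^{p-1}$, and that $\|R_2(t)\|_{L^2}$ is of strictly smaller order.

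\textbf{Bounds on $R_1$.} Since $\phi\in L^{2p/(2p-1)}$, the dispersive estimate gives
\begin{align*}
\|e^{is\Delta}\phi\|_{L^{2p}}\lesssim s^{-n(p-1)/(2p)}\|\phi\|_{L^{2p/(2p-1)}}, \qquad\text{so}\qquad \||e^{is\Delta}\phi|^{p}\|_{L^2}\lesssim s^{-n(p-1)/2}.
\end{align*}
By unitarity and Minkowski's inequality,
\begin{align*}
\|R_1(t)\|_{L^2}\lesssim\int_t^\infty e^{-a(p-1)s}s^{-n(p-1)/2}\,ds\lesssim t^{-n(p-1)/2}e^{-a(p-1)t}.
\end{align*}
Note that the claimed rate is $(t^{-n/2}e^{-at})^{p-1}$, which matches this.

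For the lower bound we exploit the exponential weight. Write $R_1(t)=i\lambda e^{it\Delta}\int_t^\infty e^{-a(p-1)s}F(s)\,ds$ with $F(s)=e^{-is\Delta}(|e^{is\Delta}\phi|^{p-1}e^{is\Delta}\phi)$. The $L^2$ norm of $R_1(t)$ equals the norm of the integral. Restrict the integral to $[t,t+\delta]$ and control the tail $[t+\delta,\infty)$ by $e^{-a(p-1)\delta}t^{-n(p-1)/2}e^{-a(p-1)t}$, which is small relative to the main piece once $\delta$ is fixed large. On $[t,t+\delta]$ we need $F(s)$ close to $F(t)$ in $L^2$, up to $o(t^{-n(p-1)/2})$. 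Then
\begin{align*}
\|R_1(t)\|_{L^2}\gtrsim \frac{1-e^{-a(p-1)\delta}}{a(p-1)}\,e^{-a(p-1)t}\,\|F(t)\|_{L^2}-(\text{errors}).
\end{align*}
It remains to show $\|F(t)\|_{L^2}=\||e^{it\Delta}\phi|^p\|_{L^2}\gtrsim t^{-n(p-1)/2}$. For this we use the Dollard/MDFM decomposition
\begin{align*}
e^{it\Delta}=M(t)D(t)\mathcal{F}M(t),\qquad M(t)=e^{i|x|^2/4t}.
\end{align*}
It gives $|e^{it\Delta}\phi|=|D(t)\mathcal{F}M\phi|$, so $\||e^{it\Delta}\phi|^p\|_{L^2}=t^{-n(p-1)/2}c\,\|(\mathcal{F}M(t)\phi)^{p}\|_{L^2}$. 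As $t\to\infty$, $\mathcal{F}M(t)\phi\to\hat\phi$ in $L^{2p}$ by Hausdorff--Young, since $\phi\in L^{2p/(2p-1)}$ and $M(t)\phi\to\phi$ there by dominated convergence. Hence the norm tends to $\|\hat\phi\|_{L^{2p}}^p>0$, because $\phi\neq0$: the scattering state is nonzero by Theorem \ref{thm2}.

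\textbf{Near-constancy of $F$.} This is the delicate step. We need $\|F(s)-F(t)\|_{L^2}=o(t^{-n(p-1)/2})$ uniformly for $s\in[t,t+\delta]$. Using the same factorization, $F(s)$ is $e^{-is\Delta}M(s)D(s)$ applied to $|\mathcal{F}M\phi|^{p-1}\mathcal{F}M\phi$ times $s^{-n(p-1)/2}$. For fixed $\delta$ and large $t$ the operators $e^{-is\Delta}M(s)D(s)$ vary slowly, and the nonlinear profile converges. We therefore expect strong continuity plus a density argument to give the needed $o(1)$ relative error. Alternatively, and more robustly, we can avoid this: bound $R_1(t)$ from below by pairing in $L^2$ with the test function $e^{it\Delta}F(t)/\|F(t)\|$ and using the convergence of the profiles.

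\textbf{Estimate on $R_2$.} We use the Strichartz estimate on $[t,\infty)$ with the pair $(q,r)$, with Hölder as in Appendix A:
\begin{align*}
\|R_2(t)\|_{L^2}\lesssim \|e^{-a(p-1)s}\|_{L^\theta([t,\infty))}\,\bigl(\|v\|_{L^qL^r}+\|e^{is\Delta}\phi\|_{L^qL^r}\bigr)^{p-1}\,\|v-e^{is\Delta}\phi\|_{L^q([t,\infty);L^r)} .
\end{align*}
By Proposition \ref{prop1}, the last factor is $o(e^{-a(p-1)t})$, so $\|R_2(t)\|_{L^2}=o(e^{-2a(p-1)t})$. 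This is far smaller than $t^{-n(p-1)/2}e^{-a(p-1)t}$, and the contribution from the difference between $v$ and $e^{is\Delta}\phi$ in $R_1$ is likewise negligible.

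Combining the two bounds on $R_1$ with the estimate on $R_2$ gives the two-sided estimate for $t>T$. The main obstacle is the lower bound for $R_1$, namely preventing cancellation in the oscillatory time integral. The exponential weight localizes the integral to a window of length $O(1)$, and there the profile is essentially frozen, which is what should make the lower bound work.
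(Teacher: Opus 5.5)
Your architecture is sound and is in substance the paper's. Your remainder $R_2$ is the paper's $R_3$, handled by Strichartz plus the $o(e^{-a(p-1)t})$ scattering rate from Proposition~\ref{prop1}. Your upper bound on $R_1$ is correct, and so is your lower bound $\|F(t)\|_{L^2}\gtrsim t^{-n(p-1)/2}$ via $\SCR{F}M(t)\phi\to\widehat{\phi}$ in $L^{2p}$. The gap is the step you flag yourself: near-constancy of $F$ on $[t,t+\delta]$ is only ``expected'', and the justification you offer does not work as stated. Strong continuity in $s$ gives no smallness that is uniform over windows of fixed length $\delta$ as $t\to\infty$. For example, $e^{-is\Delta}$ is strongly continuous, yet $\|e^{-is\Delta}f-e^{-it\Delta}f\|_{L^2}=\|(e^{-i(s-t)\Delta}-1)f\|_{L^2}$ is a fixed positive number for $s-t=\delta$. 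What you need is that the rescaled profile converges as $s\to\infty$, and your pairing alternative needs the same fact.

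The missing ingredient is the identity $e^{-is\Delta}M(s)D(s)=M(-s)\SCR{F}^{-1}$. It follows immediately from $e^{is\Delta}=M(s)D(s)\SCR{F}M(s)$ and unitarity of each factor. Write $N_0(u)=|u|^{p-1}u$ and use $N_0(D(s)g)=(2s)^{-n(p-1)/2}D(s)N_0(g)$; then
\begin{align*}
(2s)^{n(p-1)/2}F(s)=M(-s)\SCR{F}^{-1}N_0(\SCR{F}M(s)\phi)\longrightarrow \SCR{F}^{-1}N_0(\widehat{\phi})=:h \quad\text{in } L^2 .
\end{align*}
The inner factor converges because $\SCR{F}M(s)\phi\to\widehat{\phi}$ in $L^{2p}$, and $(M(-s)-1)h\to0$ by dominated convergence.

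With this identity, the window and the choice of $\delta$ are unnecessary. Replace $F(s)$ by $(2s)^{-n(p-1)/2}h$ on the whole half-line $[t,\infty)$, at a cost of $o(t^{-n(p-1)/2}e^{-a(p-1)t})$. What remains is a positive scalar weight times a fixed function, so no cancellation can occur. Its norm is exactly $\|\widehat{\phi}\|_{L^{2p}}^{p}\int_t^\infty e^{-a(p-1)s}(2s)^{-n(p-1)/2}\,ds$.

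This is precisely the paper's decomposition. Its $I(t)$ is this non-oscillating term. Its $R_1$ and $R_2$ are the two convergence errors: $M(t)M(-s)-1$ acting on $\SCR{F}^{-1}N(\widehat{\phi})$, and $(M(s)-1)\phi$ in $L^{(2p)'}$.

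One minor point: $\phi\neq0$ follows from Theorem~\ref{thm2} only when $\operatorname{Im}\lambda<0$. For $\operatorname{Im}\lambda=0$, use the mass identity \eqref{eq3} instead.
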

 Rewriting the statement of Theorem \ref{thm1} by the transformation $v(t)=e^{at}u(t)$, we have 
\begin{align*}
C_1 t^{-n(p-1)/2} e^{-apt} \leq \|u(t)-e^{-at} e^{it\Delta}\phi\|_{L^2} \leq C_2 t^{-n(p-1)/2}e^{-apt}
\end{align*}
for the solution $u(t)$ to (\ref{CP1}).

We note that the scattering state $\phi$ in Theorem \ref{thm1} is nonzero. Indeed,
if $\operatorname{Im}\lambda=0$, then (\ref{eq3}) leads to
\begin{align*}
\|\phi\|_{L^2}=\lim_{t\to\infty}\|e^{-it\Delta}v(t)\|_{L^2}=\lim_{t\to\infty}\|v(t)\|_{L^2}=\|v_0\|_{L^2}
\end{align*}
 and if $\operatorname{Im}\lambda<0$, then Theorem \ref{thm2} yields
\begin{align*}
\|\phi\|_{L^2}=\lim_{t\to\infty}\|e^{-it\Delta}v(t)\|_{L^2}=\lim_{t\to\infty} \|v(t)\|_{L^2}\geq C,
\end{align*}
where the constant $C>0$ is as in Theorem \ref{thm2}.

\begin{Rem}
The assumption $\phi \in L^{\frac{2p}{2p-1}}(\R^n)$ in Theorem \ref{thm1} is satisfied for some large initial data if $\operatorname{Im}\lambda=0$. Namely, for any $M>0$ there exists an initial data $u(0)\in L^2$ such that $\|u(0)\|_{L^2}>M$ and a scattering state $\phi$ belongs to $L^{\frac{2p}{2p-1}}(\R^n)$ for $\lambda\in \R$. We present the result on the final state problem (Proposition \ref{prop2}) in Appendix B.
In the case $\operatorname{Im}\lambda<0$, the problem is still open because the nonlinear dissipation amplifies the mass of the solution when the equation is solved backward in time for the final state problem. Indeed, it is known that $L^2$-solution blows up backward in finite time for small data in the case where $a=0$ and $\operatorname{Im}\lambda <0$ by Cazenave--Martel--Zhao \cite{CMZ} and Kita \cite{Kita1}.
\end{Rem}
\begin{Rem}
Compared to the result of Takagi--Takizawa \cite{Takagi-Takizawa}, we improve the following:
\begin{itemize}
\item  We can deal with full range $p\in(1, 1+4/n)$, while the previous work \cite{Takagi-Takizawa} treat  the case $p \in 2 \N+1$. 
\item We address widely the function space since $M^{1,1}\subset \left( L^1 \cap L^{\infty} \right)  \subset \left( L^2 \cap L^{\frac{2p}{2p-1}} \right) $ (see e.g. \cite[Proposition 2.3]{Takagi-Takizawa}). 
\end{itemize}
\end{Rem}

The paper is organized as follows. In Section 2, we collect several auxiliary estimates. Sections 3 and 4  are devoted to the proofs of Theorems \ref{thm2} and \ref{thm1}, respectively. Finally, we give the global well-posedness result (Proposition \ref{prop1}), existence for initial values satisfying the assumption in Theorem \ref{thm1} (Proposition \ref{prop2}) and an alternative proof of Theorem \ref{thm2} in Appendices.

\subsection*{Notation}
We write $N(u)=\lambda|u|^{p-1}u$. 
We say that $u, v$ are solutions to (\ref{CP1}) and (\ref{CP2}) if they satisfy the corresponding integral equations
\begin{align*}
&u(t)=e^{-at}e^{it\Delta}u_0-i\int_{0}^{t}e^{-a(t-s)} e^{i(t-s)\Delta} N(u(s))ds,
\\
&v(t)=e^{it\Delta}v_0-i\int_{0}^{t}e^{-a(p-1)s} e^{i(t-s)\Delta} N(v(s))ds,
\end{align*} 
respectively.
Such solutions are called mild solutions.
We write
\begin{align*}
&\SCR{F}f(\xi)=\widehat{f}(\xi)=(2\pi)^{-n/2}\int_{\R^n}f(x)e^{-ix\cdot\xi}dx,
\\
&\SCR{F}^{-1}f(x)=\check{f}(x)=(2\pi)^{-n/2} \int_{\R^n}f(\xi)e^{ix\cdot\xi}d\xi
\end{align*} 
for the Fourier transform and the inverse Fourier transform of $f$, respectively. 
We define the free Schr\"{o}dinger propagator $e^{it\Delta}$ by
\begin{align} \label{formula}
e^{it\Delta}f(x)=\SCR{F}^{-1} e^{-it|\xi|^2} \SCR{F}f(x)&=(4\pi it)^{-n/2}\int_{\R^n} e^{i\frac{(x-y)^2}{4t}} f(y) dy \notag
\\
&=(4\pi it)^{-n/2} e^{i\frac{|x|^{2}}{4t}} \int_{\R^n} \left( e^{i\frac{|y|^2}{4t}} f(y)\right) e^{iy\cdot \frac{x}{2t}} dy \notag
\\
&=\left( M(t)D(t)\SCR{F} M(t) f \right) (x).
\end{align} 
The most right hand side of the above equality is called MDFM (or Dollard) decomposition formula,
where $M(t) = e^{i|x|^{2}/(4t)}$ is the multiplier and $D(t)$ is the dilation which leaves the $L^2$-norm
invariant by $D(t)f(x)=(2it)^{-n/2}f(x/2t)$. 
We often write $L^q=L^q(\R^n)$ for short.
We often use the notation $X\lesssim Y$ in the proofs if $X\leq CY$ with some constant $C>0$. 
For a Lebesgue exponent $\alpha\in [1,\infty]$, $\alpha'$ denotes its H\"{o}lder conjugate.
%
%
%

\section{Preliminaries}
We introduce some lemmas and known properties which are used in order to prove our results. 
First we have the following estimate for the linear Schr\"{o}dinger evolution operator by (\ref{formula}) and the Hausdorff--Young inequality:
\begin{align} \label{LpLq}
\|e^{it\Delta}f\|_{L^q}\leq (4 \pi |t|)^{-\frac{n}{2}(1-\frac{2}{q})} \|f\|_{L^{q'}}
\end{align}
for $t\ne 0$ and $q\in [2, \infty]$.

We recall the Strichartz estimates for the free Schr\"{o}dinger propagator (see e.g. \cite{Strichartz, Ginibre-Velo1, Yajima4, Keel-Tao}).

\begin{Def}
We call $(q,r)$ an $admissible$ $pair$ if 
\begin{align*}
2\leq q, r\leq \infty, \hspace{3mm}\frac{2}{q}+\frac{n}{r}=\frac{n}{2}, \hspace{3mm} (n,q,r)\ne (2,2,\infty).
\end{align*}
\end{Def}
\begin{Lem}[Strichartz estimates]
Let $(q,r), (\widetilde{q}, \widetilde{r})$ be admissible pairs, $I$ be an interval on $\R$ and $t_0$ belong to closure of $I$. 
Then it holds that
\begin{align*}
\| e^{it\Delta}f\|_{L^{q}(\R; L^r)} &\leq C \|f\|_{L^2},
\\
\left\| \int_{t_0}^{t} e^{i(t-s)\Delta} F(s) ds \right\|_{L^{\widetilde{q}}(I; L^{\widetilde{r}})} &\leq C \|F\|_{L^{q'}(I; L^{r'})},
\end{align*}
where the constant $C>0$ depends only on $n, q$ and $\widetilde{q}$.
\end{Lem}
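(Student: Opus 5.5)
The plan is the standard $TT^*$ argument built on the dispersive estimate (\ref{LpLq}), with Keel--Tao needed only at the endpoint. Set $Tf(t)=e^{it\Delta}f$, viewed as a map $L^2\to L^q(\R;L^r)$. Its adjoint is
\begin{align*}
T^*F=\int_{\R}e^{-is\Delta}F(s)\,ds,
\end{align*}
so that
\begin{align*}
TT^*F(t)=\int_{\R}e^{i(t-s)\Delta}F(s)\,ds.
\end{align*}
Since $\|T\|^2=\|T^*\|^2=\|TT^*\|$, the homogeneous estimate is equivalent to the bound
\begin{align*}
\|TT^*F\|_{L^q(\R;L^r)}\lesssim \|F\|_{L^{q'}(\R;L^{r'})}.
\end{align*}

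To get this bound, apply (\ref{LpLq}) with exponent $r$ inside the time integral:
\begin{align*}
\|TT^*F(t)\|_{L^r}\lesssim \int_{\R}|t-s|^{-\frac{n}{2}(1-\frac{2}{r})}\|F(s)\|_{L^{r'}}\,ds .
\end{align*}
Admissibility gives $\frac n2(1-\frac2r)=\frac2q$. For $q>2$ the exponent $2/q$ lies in $(0,1)$, and the one-dimensional Hardy--Littlewood--Sobolev inequality maps $L^{q'}(\R)$ to $L^q(\R)$ precisely when $1+\frac1q=\frac1{q'}+\frac2q$, which holds. This settles all non-endpoint pairs. The case $(q,r)=(\infty,2)$ is just unitarity of $e^{it\Delta}$ on $L^2$.

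For the inhomogeneous estimate with two different admissible pairs, I would first treat the untruncated operator $\int_{\R}e^{i(t-s)\Delta}F(s)\,ds=T\,T^*F$. Writing it as the composition $\widetilde{T}\circ T^*$, where $\widetilde{T}$ is the homogeneous operator for $(\widetilde q,\widetilde r)$, gives
\begin{align*}
\|\widetilde{T}T^*F\|_{L^{\widetilde q}L^{\widetilde r}}\lesssim \|T^*F\|_{L^2}\lesssim\|F\|_{L^{q'}L^{r'}}.
\end{align*}
The time-truncated operator $\int_{t_0}^{t}$ restricted to $I$ then follows from the Christ--Kiselev lemma, which applies whenever $q'<\widetilde q$. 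That condition holds except when both pairs are the endpoint $q=\widetilde q=2$. Extending $F$ by zero outside $I$ reduces everything to $\R$, and the constants depend only on $n,q,\widetilde q$.

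The main obstacle is the endpoint $q=2$, $r=\frac{2n}{n-2}$ for $n\ge3$, which the statement allows since only $(2,2,\infty)$ is excluded. There the kernel $|t-s|^{-1}$ is not HLS-integrable, and Christ--Kiselev fails when both pairs are endpoint. For this case I would invoke Keel--Tao: decompose the bilinear form $\iint\langle e^{-is\Delta}F(s),e^{-it\Delta}G(t)\rangle\,ds\,dt$ dyadically in $|t-s|\sim2^j$. On each piece, interpolate between the $L^1\to L^\infty$ decay and the $L^2$ energy bound to obtain off-diagonal estimates with a gain for exponents near $(r',r)$. Then sum the pieces via bilinear real interpolation, which handles the retarded truncation directly. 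Since this endpoint is not used for the pair $(q,r)=(\frac{4(p+1)}{n(p-1)},p+1)$ in the paper, one could alternatively just cite \cite{Keel-Tao} for it.
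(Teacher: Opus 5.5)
Your argument is correct. The paper does not prove this lemma; it quotes it from Strichartz, Ginibre--Velo, Yajima and Keel--Tao. What you wrote is essentially the standard proof behind those citations:
\begin{itemize}
\item $TT^*$ with the dispersive bound (\ref{LpLq}) and one-dimensional Hardy--Littlewood--Sobolev for $2<q<\infty$;
\item unitarity for $q=\infty$;
\item Keel--Tao at $q=2$, $n\ge 3$.
\end{itemize}

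The real difference from the classical sources is how you handle the time truncation for mixed pairs. You invoke Christ--Kiselev, which is quick and valid whenever $q'<\widetilde q$. The older references predate it and get the non-endpoint mixed estimates by duality and H\"older interpolation in the target pair: the target $(\infty,2)$ by duality, the same-pair target by HLS, interpolation in between, and duality for the rest. Keel--Tao treat the retarded bilinear form directly, and that is what reaches the double endpoint where Christ--Kiselev fails.

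Two small points you should make explicit.
\begin{enumerate}
\item $\int_{t_0}^{t}$ is not itself a retarded integral. With $\widetilde F=F\chi_I$, write it as $\int_{-\infty}^{t}e^{i(t-s)\Delta}\widetilde F(s)\,ds-e^{it\Delta}\int_{-\infty}^{t_0}e^{-is\Delta}\widetilde F(s)\,ds$. The second term is controlled by the homogeneous estimate and its dual. This also covers $t_0=\infty$, which is how the lemma is used in Appendix A.
\item In the cases the paper actually needs, Christ--Kiselev is unnecessary. These are the same non-endpoint pair $\bigl(\frac{4(p+1)}{n(p-1)},p+1\bigr)$ on both sides, or the target $(\infty,2)$.
\begin{itemize}
\item For the same pair, your HLS bound only involves $|t-s|^{-2/q}\|F(s)\|_{L^{r'}}$, so it survives multiplication by $\chi_{\{s<t\}}$.
\item For the target $(\infty,2)$, the $L^\infty_tL^2$ bound is just the dual homogeneous estimate applied to $F$ restricted to the interval between $t_0$ and $t$.
\end{itemize}
\end{enumerate}
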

 
In particular, we often employ the pair $(q,r)=(\frac{4(p+1)}{n(p-1)}, p+1)$ as an admissible pair in this paper.
The following lemma is elementary estimates which are used in the subsequent proofs. 
\begin{Lem} \label{ElemLem}
Let $\alpha\ne0$ and $\beta>0$. There exist constants $C_2>C_1>0$ and $T>0$ such that
\begin{align*}
C_1 t^{\alpha} e^{-\beta t}\leq \int_{t}^{\infty} s^{\alpha} e^{-\beta s} ds \leq C_2 t^{\alpha} e^{-\beta t}
\end{align*}
for all $t>T$.
\end{Lem}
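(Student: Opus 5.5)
The statement is elementary; I will prove it by a change of variables and a comparison with the tail of $e^{-\beta s}$. The substitution $s=t+\tau$ gives
\begin{align*}
\int_{t}^{\infty} s^{\alpha} e^{-\beta s}\,ds = t^{\alpha}e^{-\beta t}\int_{0}^{\infty}\Bigl(1+\frac{\tau}{t}\Bigr)^{\alpha} e^{-\beta\tau}\,d\tau ,
\end{align*}
so it suffices to show that $J(t):=\int_0^\infty (1+\tau/t)^{\alpha}e^{-\beta\tau}\,d\tau$ stays between two positive constants for large $t$. I split into the cases $\alpha<0$ and $\alpha>0$.

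\emph{Case $\alpha<0$.} Here $(1+\tau/t)^{\alpha}\le 1$, so $J(t)\le 1/\beta$. For the lower bound I restrict to $\tau\in[0,1]$. For $t\ge 1$ we have $(1+\tau/t)^{\alpha}\ge 2^{\alpha}$ there, hence $J(t)\ge 2^{\alpha}(1-e^{-\beta})/\beta$.

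\emph{Case $\alpha>0$.} Now $(1+\tau/t)^{\alpha}\ge 1$ gives $J(t)\ge 1/\beta$. For the upper bound, take $t\ge1$, so that $(1+\tau/t)^{\alpha}\le(1+\tau)^{\alpha}$. Then $J(t)\le\int_0^\infty(1+\tau)^{\alpha}e^{-\beta\tau}\,d\tau<\infty$, which is finite because the exponential decay dominates the polynomial growth.

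The constants are thus $T=1$ and $C_1$, $C_2$ as above, adjusted if necessary so that $C_2>C_1$. The only point requiring care is to obtain uniformity in $t$ in the upper bound for $\alpha>0$, and the monotonicity of $(1+\tau/t)^{\alpha}$ in $t$ for $t\ge1$ settles it. An alternative route is to note by L'Hôpital's rule that the ratio of $\int_t^\infty s^\alpha e^{-\beta s}\,ds$ to $t^\alpha e^{-\beta t}$ tends to $1/\beta$, which yields the claim for $t$ large.
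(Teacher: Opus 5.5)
Your proof is correct, and it takes a different route from the paper, whose proof says only that the lemma ``follows easily from integration by parts.''

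\textbf{The paper's route.} Integration by parts gives $\int_t^\infty s^{\alpha}e^{-\beta s}\,ds=\beta^{-1}t^{\alpha}e^{-\beta t}+\frac{\alpha}{\beta}\int_t^\infty s^{\alpha-1}e^{-\beta s}\,ds$. Since $s^{\alpha-1}\le s^{\alpha}/t$ on $[t,\infty)$, the remainder is at most $\frac{|\alpha|}{\beta t}$ times the integral itself, so it can be absorbed once $t$ is large compared with $|\alpha|/\beta$. The sign of $\alpha$ then tells you which of the two bounds is immediate. This handles both signs of $\alpha$ in one stroke. It also gives the sharp asymptotics $\int_t^\infty s^{\alpha}e^{-\beta s}\,ds\sim\beta^{-1}t^{\alpha}e^{-\beta t}$, at the price of a threshold depending on $\alpha/\beta$ (for $\alpha>0$ the absorption needs $t>\alpha/\beta$).

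\textbf{Your route.} The substitution $s=t+\tau$ reduces everything to pointwise monotone comparisons of $(1+\tau/t)^{\alpha}$. This yields explicit constants valid from $T=1$, independent of $\alpha$ and $\beta$. The cost is a case split and non-sharp constants. The sharp limit $J(t)\to1/\beta$ is recoverable by your L'H\^opital remark, or by dominated convergence on $J(t)$ using the majorant you already have.

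One small point: no adjustment is needed to get $C_2>C_1$. When $\alpha<0$ you have $2^{\alpha}(1-e^{-\beta})<1$, and when $\alpha>0$ you have $(1+\tau)^{\alpha}>1$ for $\tau>0$, so the strict inequality holds automatically in both cases.
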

\begin{proof}
The result follows easily from integration by parts.
\end{proof}

The following nonlinear estimate plays an important role and is often used in this paper.
\begin{Lem} \label{Lem2}
Let $(q,r):=(\frac{4(p+1)}{n(p-1)}, p+1)$. 
Then the following nonlinear estimate holds:
\begin{align*}
\|e^{-a(p-1)\tau} N(v(\tau)) \|_{L_{\tau}^{q'}([s,t); L^{r'})} \leq C e^{-a(p-1)s} \|v\|_{L^{q}([s,t); L^r)}^p 
\end{align*}
for $0\leq s<t\leq \infty$ with some constant $C>0$, which depends only on $a,n,\lambda$ and  $p$.
\end{Lem}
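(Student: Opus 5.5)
The estimate is pointwise in time and uses only H\"older's inequality twice: first in space, then in time. Write $N(v)=\lambda|v|^{p-1}v$, so that pointwise $|N(v)|=|\lambda||v|^p$. The space exponent is $r'=\frac{p+1}{p}$, so
\begin{align*}
\|N(v(\tau))\|_{L^{r'}}=|\lambda|\,\||v(\tau)|^p\|_{L^{\frac{p+1}{p}}}=|\lambda|\,\|v(\tau)\|_{L^{p+1}}^p=|\lambda|\,\|v(\tau)\|_{L^r}^p .
\end{align*}
Hence
\begin{align*}
\|e^{-a(p-1)\tau}N(v(\tau))\|_{L^{q'}_\tau([s,t);L^{r'})}=|\lambda|\,\big\|e^{-a(p-1)\tau}\|v(\tau)\|_{L^r}^p\big\|_{L^{q'}_\tau([s,t))}.
\end{align*}

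For the time norm, the key observation is that $1<p<1+4/n$ gives $q=\frac{4(p+1)}{n(p-1)}>p+1>p$. Define $\theta\in(0,\infty]$ by
\begin{align*}
\frac{1}{q'}=\frac{p}{q}+\frac{1}{\theta},\qquad \frac1\theta=1-\frac{p+1}{q}>0.
\end{align*}
Here $\frac1\theta$ is strictly positive because $q>p+1$, so $\theta<\infty$. Applying H\"older in $\tau$ with exponents $\theta$ and $q/p$ yields
\begin{align*}
\big\|e^{-a(p-1)\tau}\|v\|_{L^r}^p\big\|_{L^{q'}([s,t))}\le \|e^{-a(p-1)\tau}\|_{L^\theta([s,t))}\,\|v\|_{L^q([s,t);L^r)}^p .
\end{align*}

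It remains to bound the exponential factor. Since $a(p-1)>0$ and $t\le\infty$,
\begin{align*}
\|e^{-a(p-1)\tau}\|_{L^\theta([s,t))}\le\Big(\int_s^\infty e^{-a(p-1)\theta\tau}d\tau\Big)^{1/\theta}=(a(p-1)\theta)^{-1/\theta}e^{-a(p-1)s}.
\end{align*}
This gives the lemma with $C=|\lambda|(a(p-1)\theta)^{-1/\theta}$, which depends only on $a,n,\lambda,p$.

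The only point requiring care is the exponent check $q>p+1$. It is equivalent to $4>n(p-1)$, i.e.\ the mass-subcritical assumption, and it guarantees $\theta<\infty$, which is what lets the integral of the exponential weight produce the factor $e^{-a(p-1)s}$. There is no genuine obstacle beyond this.
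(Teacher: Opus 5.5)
Your proof is correct and follows the paper's route: H\"older in space with $r'=\frac{p+1}{p}$ to get $\|v(\tau)\|_{L^r}^p$, then H\"older in time against the exponential weight, using $p<1+4/n$ to make the weight's exponent finite. The only difference is cosmetic: you do the time step in a single application with $\frac{1}{\theta}=1-\frac{p+1}{q}=\frac{4-n(p-1)}{4}$, whereas the paper splits it into two steps through the exponents $\alpha$ and $\beta$, with $\theta=\beta/(p-1)$.
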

\begin{proof}
We put 
\begin{align} \label{def1}
\alpha:=\frac{2(p^2-1)}{4-(n-2)(p-1)}, \hspace{3mm} \beta:=\frac{4(p-1)}{4-n(p-1)}.
\end{align}
Using H\"{o}lder inequalities three times as $\frac{1}{r'}=\frac{1}{r}+\frac{p-1}{p+1}, \hspace{2mm}\frac{1}{q'}=\frac{1}{q}+\frac{p-1}{\alpha}$ and $\frac{1}{\alpha}=\frac{1}{\beta}+\frac{1}{q}$, we have
\begin{align*}
\|e^{-a(p-1)\tau} N(v(\tau)) \|_{L_{\tau}^{q'}([s,t); L^{r'})} &\leq   \Bigl\|e^{-a(p-1)\tau} \||v(\tau)|^{p-1}\|_{L^{\frac{p+1}{p-1}}} \|v(\tau)\|_{L^r} \Bigr\|_{L^{q'}[s,t)}
\\
&=   \Bigl\| \left(e^{-a(p-1)\tau} \|v(\tau)\|_{L^r}^{p-1}\right) \|v(\tau)\|_{L^r} \Bigr\|_{L^{q'}[s,t)}
\\
&\leq   \Bigl\| e^{-a(p-1)\tau} \|v(\tau)\|_{L^r}^{p-1} \Bigr\|_{L^{\frac{\alpha}{p-1}}[s,t)}  \|v\|_{L^{q}([s,t); L^r)} 
\\
&=   \Bigl\| e^{-a(1-1/p)\tau} \|v(\tau)\|_{L^r} \Bigr\|_{L^{\alpha}[s,t)}^{p-1}  \|v\|_{L^{q}([s,t); L^r)} 
\\
&\leq \| e^{-a(1-1/p)\tau} \|_{L^{\beta}[s,t)}^{p-1} \|v\|_{L^{q}([s,t); L^r)}^p
\\
&\lesssim  e^{-a(p-1)s} \|v\|_{L^{q}([s,t); L^r)}^p,
\end{align*}
which completes the proof.
\end{proof}

The following lemma is used in Section 3.
\begin{Lem} \label{lem31}
Let $1<p<1+4/n, (q,r)=(\frac{4(p+1)}{n(p-1)}, p+1)$ and $v: [0,\infty)\times \R^n \to \C$ be a function in $L_{\operatorname{loc}}^{q}([0,\infty); L^r(\R^n))$.
Then it holds that
\begin{align*}
\int_{s}^{t} e^{-a(p-1)\tau} \|v(\tau)\|_{L^{r}}^{r} d\tau \leq \|v\|_{L^{q}([s,t); L^{r})}^{r}
\end{align*}
for $0\leq s\leq t$.
\end{Lem}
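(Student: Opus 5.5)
The plan is a single Hölder inequality in time, after rewriting $\|v(\tau)\|_{L^r}^r$ as a power of the quantity whose $L^q$ norm appears on the right. Two facts about the exponents drive it. With $r=p+1$ and $q=\frac{4(p+1)}{n(p-1)}$,
\begin{align*}
\frac{q}{r}=\frac{4}{n(p-1)}>1
\end{align*}
exactly because $p<1+4/n$. So $\|v(\tau)\|_{L^r}^r=\bigl(\|v(\tau)\|_{L^r}^{q}\bigr)^{r/q}$ is a sub-unit power of an integrable function of $\tau$ whenever $v\in L^q_{\operatorname{loc}}L^r$. The conjugate exponent of $q/r$ is
\begin{align*}
\gamma:=\Bigl(\frac{q}{r}\Bigr)'=\frac{4}{4-n(p-1)},
\end{align*}
which is the same quantity that produced $\beta$ in \eqref{def1} in the proof of Lemma \ref{Lem2}.

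First, apply Hölder on $[s,t)$ with exponents $\gamma$ and $q/r$ to the product of $w(\tau):=e^{-a(p-1)\tau}$ and $\|v(\tau)\|_{L^r}^r$. This gives
\begin{align*}
\int_s^t e^{-a(p-1)\tau}\|v(\tau)\|_{L^r}^r\,d\tau\le \|w\|_{L^{\gamma}[s,t)}\,\|v\|_{L^q([s,t);L^r)}^{r},
\end{align*}
so the lemma reduces to the scalar bound $\|w\|_{L^\gamma[s,t)}\le 1$. Second, estimate the weight. When $t-s\le 1$ this is immediate from $0<w\le 1$. In general, compute
\begin{align*}
\|w\|_{L^\gamma[s,t)}^{\gamma}=\int_s^t e^{-\gamma a(p-1)\tau}\,d\tau\le \frac{e^{-\gamma a(p-1)s}}{\gamma a(p-1)},
\end{align*}
which is at most $1$ provided $\gamma a(p-1)=\frac{4a(p-1)}{4-n(p-1)}\ge 1$.

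The main obstacle is this second step, i.e.\ obtaining the constant exactly $1$ uniformly in $0\le s\le t$. Hölder is sharp here: the function with $\|v(\tau)\|_{L^r}^{q}\propto w(\tau)^{\gamma}$ on $[s,t)$ turns it into an equality. Consequently, the constant-free inequality as worded requires $\sup_{s\le t}\|w\|_{L^\gamma[s,t)}\le 1$. That supremum is attained at $s=0$, $t\to\infty$, and equals $(\gamma a(p-1))^{-1/\gamma}$. So if $\frac{4a(p-1)}{4-n(p-1)}<1$, the extremizing profile above violates the stated inequality on long intervals. I would check this point before proceeding.

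For the applications in Section 3, the inequality will be used only in the form
\begin{align*}
\int_s^t e^{-a(p-1)\tau}\|v(\tau)\|_{L^r}^r\,d\tau\le (\gamma a(p-1))^{-1/\gamma}\,e^{-a(p-1)s}\,\|v\|_{L^q([s,t);L^r)}^{r},
\end{align*}
which the same Hölder argument proves unconditionally. In the proof I would state explicitly where the normalization $\|w\|_{L^\gamma}\le 1$ is used.
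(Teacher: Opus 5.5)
Your argument is the paper's own: a single H\"older inequality in $\tau$ with exponents $\gamma$ and $q/r$, where $\frac{1}{\gamma}+\frac{r}{q}=1$, followed by a bound on $\|e^{-a(p-1)\tau}\|_{L^{\gamma}[s,t)}$.

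You part ways with the paper only at that last bound, and there your analysis is the right one. The paper asserts $\|e^{-a(p-1)\tau}\|_{L^{\gamma}[s,t)}\le e^{-a(p-1)s}$. But the exact value tends to $(\gamma a(p-1))^{-1/\gamma}e^{-a(p-1)s}$ as $t\to\infty$. So the paper's final step, and the lemma with constant $1$, hold only when $\frac{4a(p-1)}{4-n(p-1)}\ge 1$.

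Your sharpness argument is sound. Any nonnegative $g\in L^{q/r}$ is realized as $\|v(\tau)\|_{L^r}^r$ by $v(\tau,x)=g(\tau)^{1/r}\psi(x)$ with $\|\psi\|_{L^r}=1$. Hence, by duality, the best constant on $[s,t)$ is exactly $\|e^{-a(p-1)\tau}\|_{L^{\gamma}[s,t)}$. Even the time-independent choice $v(\tau,x)=\psi(x)$, with $s=0$ and $t=1/(a(p-1))$, gives a left side $(1-e^{-1})(a(p-1))^{-1}$ against a right side $(a(p-1))^{-n(p-1)/4}$, which fails for small $a>0$.

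The version you propose to prove, with the factor $(\gamma a(p-1))^{-1/\gamma}e^{-a(p-1)s}$, is all that Section 3 actually uses. There one should also restore the factor $2|\operatorname{Im}\lambda|$ coming from (\ref{eq3}). Both constants are harmlessly absorbed by making the smallness threshold for $\|v(T)\|_{L^2}$ in (\ref{inq34}) smaller.
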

\begin{proof}
Noting that $q>r$, we have by the H\"{o}lder inequality 
\begin{align*}
\int_{s}^{t} e^{-a(p-1)\tau} \|v(\tau)\|_{L^{r}}^{r} d\tau &\leq \| e^{-a(p-1)\tau}\|_{L^{\gamma}[s,t)} \Bigl\| \|v(\tau)\|_{L^{r}}^{r} \Bigr\|_{L^{q/r}[s,t)}
\\
&\leq e^{-a(p-1)s} \| v\|_{L^q([s,t); L^r)}^r,
\end{align*}
where $\gamma$ is the constant which satisfies $1=\frac{1}{\gamma}+\frac{r}{q}$.
\end{proof}
\section{Proof of Theorem \ref{thm2}}
We first have 
\begin{align} \label{inq33}
\| v\|_{L^q ([T,\infty); L^r)} \leq C \|v(T)\|_{L^2}
\end{align}
with some constants $C>0$ and $T>0$ by an argument similar to that in STEP 2 in the proof of Proposition \ref{prop1}.
We prove it by contradiction.
Assume that $\displaystyle\lim_{t\to \infty} \| v(t)\|_{L^2}=0$.
Then by the continuity of $v(t)$ in $t$ and $v_0\ne 0$, we possess
\begin{align} \label{inq34}
0<\|v(T)\|_{L^2}<(2C^r)^{-\frac{1}{p-1}}.
\end{align}

Thanks to (\ref{eq3}) and Lemma \ref{lem31}, we obtain for $t>T$,
\begin{align*}
\|v(t)\|_{L^2}^{2}\geq \|v(T)\|_{L^2}^{2}-\|v\|_{L^q([T,t]; L^r)}^r,
\end{align*}
which implies with (\ref{inq33}) and (\ref{inq34}) that
\begin{align*}
\|v(t)\|_{L^2}^{2}&\geq \|v(T)\|_{L^2}^{2}-C^r \|v(T)\|_{L^2}^r 
\\
&=\|v(T)\|_{L^2}^{2}\left(1-C^r \|v(T)\|_{L^2}^{p-1}\right)
\\
&\geq \frac{1}{2} \|v(T)\|_{L^2}^2 >0.
\end{align*}
This contradicts that $\displaystyle\lim_{t\to \infty} \| v(t)\|_{L^2}=0$.


\section{Proof of Theorem \ref{thm1}}

We decompose the Duhamel term of (\ref{CP2}) into the four terms in the same manner as in Takagi--Takizawa \cite{Takagi-Takizawa}:
\begin{align} \label{decomp1}
  &v(t)-e^{it \Delta}\phi= I(t)+\sum_{j=1}^{3} R_j(t),
\end{align}
where
\begin{align*}
&I(t)=i \int_{t}^{\infty}e^{-a(p-1)s} s^{-n(p-1)/2} M(t)D(t) N(\widehat{\phi})\,ds,   
\\
&R_1(t)=i \int_{t}^{\infty}e^{-a(p-1)s} s^{-n(p-1)/2} M(t)D(t) \SCR{F} \{M(t)M(-s)-1\} \SCR{F}^{-1} N(\widehat{\phi})\,ds,  
\\
&R_2(t)=i \int_{t}^{\infty}e^{-a(p-1)s}e^{i(t-s)\Delta}[N(e^{is\Delta}\phi)-N(M(s)D(s)\widehat{ \phi})]\,ds, 
\\
&R_3(t)=i \int_{t}^{\infty}e^{-a(p-1)s}e^{i(t-s)\Delta}\{N(v(s))-N(e^{is\Delta}\phi)\} \,ds. 
\end{align*}

We have
\begin{align*}
\|I(t)\|_{L^2}&=\int_{t}^{\infty} e^{-a(p-1)s} s^{-n(p-1)/2} ds \|N(\widehat{\phi})\|_{L^2},
\end{align*}
which, together with Lemma \ref{ElemLem}, leads to
\begin{align} \label{I}
A t^{-n(p-1)/2} e^{-a(p-1)t} \leq \|I(t)\|_{L^2} \leq B t^{-n(p-1)/2} e^{-a(p-1)t} 
\end{align}
for some constants $A, B>0$.
We see
\begin{align} \label{R1'}
\|R_1(t)\|_{L^2}&\leq \int_{t}^{\infty} e^{-a(p-1)s} s^{-n(p-1)/2}\|\left(M(t)M(-s)-1\right)\SCR{F}^{-1} N(\widehat{\phi})\|_{L^2} ds.
\end{align}
We note that $\SCR{F}^{-1} N(\widehat{\phi})\in L^2$ by the Hausdorff--Young inequality if $\phi\in L^{(2p)'}$.
Thus by the density there exists a sequence $\{f_m\}_{m=1}^{\infty}\subset \CAL{S}(\R^n)$ such that $\|f_m-\SCR{F}^{-1} N(\widehat{\phi})\|_{L^2}<1/m$. For $T\leq t\leq s$, we obtain
\begin{align*}
&\|\left(M(t)M(-s)-1\right)\SCR{F}^{-1} N(\widehat{\phi})\|_{L^2}
\\
&\leq \|\left(M(t)M(-s)-1\right)f_m\|_{L^2}+\|\left(M(t)M(-s)-1\right)(f_m-\SCR{F}^{-1} N(\widehat{\phi}))\|_{L^2}
\\
&\leq \left\|\frac{|x|^2}{2}\left(\frac{1}{t}-\frac{1}{s}\right)f_m\right\|_{L^2}+\frac{2}{m}
\\
&\leq \frac{1}{2T}\||\cdot|^2 f_m\|_{L^2}+\frac{2}{m}.
\end{align*}
Thus we also have
\begin{align*}
\limsup_{T\to\infty} \|\left(M(t)M(-s)-1\right)\SCR{F}^{-1} N(\widehat{\phi})\|_{L^2}\leq \frac{2}{m}
\end{align*}
for all $m\in \N$, which derives
\begin{align*}
\lim_{T\to\infty} \|\left(M(t)M(-s)-1\right)\SCR{F}^{-1} N(\widehat{\phi})\|_{L^2}=0.
\end{align*}
This and  (\ref{R1'}) show that there exists  $T=T(a,n,p,\phi)>0$ fulfilling
\begin{align} \label{R1}
\|R_1(t)\|_{L^2}&\leq \frac{A}{3} e^{-a(p-1)t} t^{-n(p-1)/2}.
\end{align}

By the H\"{o}lder inequality, the $L^q$-$L^{q'}$ estimate (\ref{LpLq}) and the Hausdorff--Young inequality, we can compute $R_2$ as follows:
\begin{align} \label{R2'}
\|R_2(t)\|_{L^2}&\leq \int_{t}^{\infty} e^{-a(p-1)s} \| N(e^{is\Delta}\phi)-N(M(s)D(s)\widehat{ \phi})\|_{L^2} ds  \notag
\\
&\lesssim \int_{t}^{\infty} e^{-a(p-1)s} \left\| \left(|e^{is\Delta}\phi|^{p-1}+| M(s)D(s)\widehat{ \phi}|^{p-1}\right)|e^{is\Delta}\phi- M(s)D(s)\widehat{ \phi}| \right\|_{L^2} ds  \notag
\\
&\lesssim \int_{t}^{\infty} e^{-a(p-1)s} \left\| \left(|e^{is\Delta}\phi|^{p-1}+| M(s)D(s)\widehat{ \phi}|^{p-1}\right) \right\|_{L^{2p'}} \|e^{is\Delta}\phi- M(s)D(s)\widehat{ \phi} \|_{L^{2p}} ds \notag
\\
&\lesssim \int_{t}^{\infty} e^{-a(p-1)s} \left(\|e^{is\Delta}\phi\|_{L^{2p}}^{p-1}+ \|M(s)D(s)\widehat{ \phi}\|_{L^{2p}}^{p-1}\right)  s^{-\frac{n(p-1)}{2p}} \|(M(s)-1) \phi \|_{L^{(2p)'}} ds \notag
\\
&\lesssim \int_{t}^{\infty} e^{-a(p-1)s} s^{-\frac{n(p-1)^2}{2p}} \|\phi \|_{L^{(2p)'}} s^{-\frac{n(p-1)}{2p}}  \|(M(s)-1)\phi \|_{L^{(2p)'}} ds \notag
\\
&\lesssim \int_{t}^{\infty} e^{-a(p-1)s} s^{-\frac{n(p-1)}{2}}  \|(M(s)-1)\phi \|_{L^{(2p)'}} ds \|\phi \|_{L^{(2p)'}}.
\end{align}
By Lemma \ref{ElemLem} and applying the fact $\displaystyle\lim_{s\to\infty}\|(M(s)-1)\phi\|_{L^{(2p)'}}=0$, which follows from the dominated convergence theorem, to (\ref{R2'}) there exists $T=T(a,n,p,\phi)>0$ such that
\begin{align} \label{R2}
\|R_2(t)\|_{L^2}&\leq \frac{A}{3} t^{-n(p-1)/2} e^{-a(p-1)t}
\end{align}
for $t>T$.
The Strichartz estimate and the H\"{o}lder inequality yield
\begin{align} \label{R3}
\|R_3(t)\|_{L^2}&\lesssim \| e^{-a(p-1)s} \left( N(v(s)-N(e^{is\Delta}\phi) \right)\|_{L^{q'}([t,\infty); L^{r'})} \notag
\\
&\lesssim   \|e^{-a(p-1)s} \left( |v(s)|^{p-1}+|e^{is\Delta}\phi|^{p-1} \right) (v(s)-e^{is\Delta}\phi)\|_{L^{q'}([t,\infty); L^{r'})} \notag
\\
&\lesssim  \left\|e^{-a(p-1)s} \| |v(s)|^{p-1}+|e^{is\Delta}\phi|^{p-1} \|_{L^{\frac{p+1}{p-1}}} \| v(s)-e^{is\Delta}\phi\|_{L^{r}} \right\|_{L^{q'}[t,\infty)}  \notag
\\
&\leq \left\| e^{-a(p-1)s}\left( \|v(s)\|_{L^r}^{p-1}+\|e^{is\Delta}\phi\|_{L^r}^{p-1} \right) \| v(s)-e^{is\Delta}\phi\|_{L^r}  \right\|_{L^{q'}[t,\infty)}  \notag
\\
&\leq \|e^{-a(p-1)s}\|_{L^{\alpha}[t,\infty)} \left\| \|v\|_{L^r}^{p-1}+\|e^{is\Delta}\phi\|_{L^r}^{p-1} \right\|_{L^{\frac{q}{p-1}}[t,\infty)} \| v(s)-e^{is\Delta}\phi\|_{L^r} \|_{L^{q}[t,\infty)}  \notag
\\
&\leq e^{-a(p-1)t} \left( \|v\|_{L^{q}([t,\infty); L^{r})}^{p-1}+\|e^{is\Delta}\phi\|_{L^{q}([t,\infty); L^{r})}^{p-1} \right) \| v(s)-e^{is\Delta}\phi\|_{L^{q}([t,\infty); L^{r})}  \notag
\\
&\leq e^{-a(p-1)t} \left( C^{p-1}+ \|\phi\|_{L^2}^{p-1} \right) \| v(s)-e^{is\Delta}\phi\|_{L^{q}([t,\infty); L^{r})}   \notag
\\
&\lesssim e^{-2a(p-1)t}.
\end{align}
Here we have used Proposition \ref{prop1} below.

Therefore we attain by (\ref{decomp1}) 
\begin{align*}
\|I(t)\|_{L^2}-\sum_{j=1,2,3} \|R_j(t)\|_{L^2} \leq \|v(t)-e^{it \Delta}\phi\|_{L^2}\leq \|I(t)\|_{L^2}+\sum_{j=1,2,3} \|R_j(t)\|_{L^2},
\end{align*}
which implies with (\ref{I}), (\ref{R1}), (\ref{R2}) and (\ref{R3}) that
\begin{align*}
C_1 t^{-n(p-1)/2}e^{-a(p-1)t} \leq \|v(t)-e^{it \Delta}\phi\|_{L^2}\leq C_2 t^{-n(p-1)/2}e^{-a(p-1)t}
\end{align*}
with some constant $C_2>C_1>0$.
This is the desired result.
\appendix
\section{Global well-posedness}
In this section we give the proof of Proposition \ref{prop1}.
\begin{proof}
\underline{STEP 1}  (Construction of global $L^2$-solution)

We first construct global solutions.
By Duhamel's formula, we have
\begin{align} \label{IE1}
 v(t)=e^{i(t-\tau)\Delta}v(\tau)
 -i\int_\tau^t e^{i(t-s)\Delta}e^{-a(p-1)s}N(v(s)) ds
\end{align}
for all $\tau\geq0$ and $t\geq\tau$.
We see that (\ref{IE1}) has a global solution $v \in C([0,\infty);L^2(\R^n))\cap L_{\mathrm{loc}}^q((0,\infty);L^r(\R^n))$ satisfying (\ref{eq3}) by an argument similar to the one in Y. Tsutsumi \cite{YTsutsumi2}.
Indeed, the only difference is the presence or
absence of the exponential decay $e^{-a(p-1)s}$ in (\ref{IE1}), which is harmless.

\vskip\baselineskip
\underline{STEP 2} ($v$ belongs to $L^{q}([0,\infty); L^r)$)

For $T<t$ and the global solution $v$ to (\ref{CP2}), the Duhamel formula and Lemma \ref{Lem2} give
\begin{align*}
\|v\|_{L^{q}([T,t]; L^r)}&\lesssim \|v(T)\|_{L^2}+\| e^{-a(1-1/p)s} \|_{L^{\beta}[T,t]}^{p-1}\|v\|_{L^{q}([T,t]; L^r)}^p
\\
&\lesssim \|v_0\|_{L^2}+e^{-a(p-1)T} \|v\|_{L^{q}([T,t]; L^r)}^p. 
\end{align*}
Noting that $e^{-a(p-1)T}\to0$ as $T\to\infty$, we have by a continuous argument $\|v\|_{L^{q}([T,t]; L^r)}\leq C\|v_0\|_{L^2}$ for all $t>T$ with some large $T=T(\|v_0\|_{L^2})>0$, which implies $\|v\|_{L^{q}([0,\infty); L^r)}<\infty$.

\vskip\baselineskip
\underline{STEP 3} (Existence of scattering state)

We prove existence of a scattering state for the global solution $v$ by Cook--Kuroda's method. For $t>t'$, the Strichartz estimate and Lemma \ref{Lem2} yield
\begin{align*}
\|e^{-it\Delta}v(t)-e^{-it'\Delta}v(t')\|_{L^2}&\lesssim \left\|\int_{t'}^{t}e^{-is\Delta}[ e^{-a(p-1)s}N(v(s))]ds \right\|_{L^2}
\\
&\lesssim e^{-a(p-1)t'} \|v\|_{L^{q}([t',t]; L^{r})}^{p}
\\
&\lesssim e^{-a(p-1)t'} \|v\|_{L^{q}([0,\infty); L^{r})}^{p}
\\
&\to 0
\end{align*}
as $t'\to \infty$, which implies that $\phi:=\displaystyle\lim_{t\to\infty} e^{-it\Delta}v(t)$ in $L^2$ exists. 

\vskip\baselineskip
\underline{STEP 4} (Scattering at exponential decay rate)

We have by the Duhamel formula, the Strichartz estimates and Lemma \ref{Lem2}
\begin{align*}
&\|v(t)-e^{it\Delta}\phi\|_{L^2}+ \|v(s)-e^{is\Delta}\phi\|_{L^q([t,\infty);L^r)}
\\
&=  \left\| \int_{t}^{\infty} e^{i(t-s)\Delta} e^{-a(p-1)s} N(v(s)) ds \right\|_{L^2}+ \left\| \int_{s}^{\infty} e^{i(s-s')\Delta} e^{-a(p-1)s'} N(v(s')) ds' \right\|_{L^{q}([t,\infty);L^{r})}
\\
&\lesssim e^{-a(p-1)t} \|v\|_{L^{q}([t,\infty);L^{r})}^p,
\end{align*}
which, combined with $\displaystyle\lim_{t\to \infty} \|v\|_{L^{q}([t,\infty);L^{r})}=0$, imply the conclusion. 
\end{proof}

\section{Final state problem}
\begin{Prop}\label{prop2}
For a given final state $\phi\in L^2 \cap L^{\frac{2p}{2p-1}}$, there exists a solution $v\in C([0,\infty); L^2)$ to (\ref{CP2}) with $\operatorname{Im}\lambda=0$ satisfying
\begin{align*}
C_1 \left(t^{-n/2}e^{-at}\right)^{p-1}\leq \|v(t)-e^{it\Delta}\phi\|_{L^2} \leq C_2 \left(t^{-n/2}e^{-at}\right)^{p-1}
\end{align*}
for all $t>T$ with some constants $C_1, C_2, T>0$.
\end{Prop}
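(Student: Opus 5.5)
We solve the final state problem by a contraction argument on a half-line $[T,\infty)$, and then extend the solution backward to $t=0$ using the global theory. For $\operatorname{Im}\lambda=0$ the mass is conserved, so backward continuation in $L^2$ is harmless. Given $\phi\in L^2\cap L^{\frac{2p}{2p-1}}$, we look for $v$ solving
\begin{align*}
v(t)=e^{it\Delta}\phi+i\int_t^\infty e^{-a(p-1)s}e^{i(t-s)\Delta}N(v(s))\,ds=:\Phi(v)(t),
\end{align*}
where $(q,r)=(\frac{4(p+1)}{n(p-1)},p+1)$. We work in the complete metric space
\begin{align*}
X_T=\{v\in C([T,\infty);L^2)\cap L^q([T,\infty);L^r):\ \|v\|_{L^\infty([T,\infty);L^2)}+\|v\|_{L^q([T,\infty);L^r)}\le 2C_0\|\phi\|_{L^2}\},
\end{align*}
with the distance given by the same norm. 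Here $C_0$ is the Strichartz constant.

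By the Strichartz estimates and Lemma \ref{Lem2} we have
\begin{align*}
\|\Phi(v)\|_{X_T}\le C_0\|\phi\|_{L^2}+Ce^{-a(p-1)T}\|v\|_{L^q([T,\infty);L^r)}^p.
\end{align*}
The difference estimate is obtained by the same H\"older chain as in (\ref{R3}):
\begin{align*}
\|\Phi(v)-\Phi(w)\|_{X_T}\le Ce^{-a(p-1)T}\bigl(\|v\|_{L^qL^r}^{p-1}+\|w\|_{L^qL^r}^{p-1}\bigr)\|v-w\|_{L^qL^r}.
\end{align*}
The factor $e^{-a(p-1)T}$ tends to $0$, so choosing $T=T(\|\phi\|_{L^2})$ large makes $\Phi$ a contraction on $X_T$. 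No smallness of $\phi$ is needed, which is consistent with the remark about large data. This gives a solution on $[T,\infty)$. Since the problem is real, (\ref{eq3}) gives $\|v(t)\|_{L^2}=\|v(T)\|_{L^2}$. We then solve (\ref{CP2}) backward from $v(T)$ on $[0,T]$, using the time-reversed version of STEP 1 of Proposition \ref{prop1} (the Y. Tsutsumi argument, which is symmetric in time when $\operatorname{Im}\lambda=0$). By uniqueness the resulting $v\in C([0,\infty);L^2)$ coincides with the solution of Proposition \ref{prop1} with data $v_0=v(0)$. Its scattering state is $\phi$ by construction, since
\begin{align*}
\|e^{-it\Delta}v(t)-\phi\|_{L^2}\lesssim e^{-a(p-1)t}\to0.
\end{align*}

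The two-sided rate now follows verbatim from the proof of Theorem \ref{thm1}. That proof used only three facts: the solution lies in $L^q([0,\infty);L^r)$, the scattering state lies in $L^{\frac{2p}{2p-1}}$, and the tail satisfies the exponential bound
\begin{align*}
\|v(s)-e^{is\Delta}\phi\|_{L^q([t,\infty);L^r)}\lesssim e^{-a(p-1)t},
\end{align*}
which is what is needed for $R_3$. The last fact comes directly from the fixed point equation and Lemma \ref{Lem2}. Applying the decomposition (\ref{decomp1}) and the bounds (\ref{I}), (\ref{R1}), (\ref{R2}) and (\ref{R3}) gives the claim, provided $N(\widehat\phi)\ne0$, i.e.\ $\phi\ne0$.

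The case $\phi=0$ is trivial: then $v\equiv0$ and the lower bound fails. So we assume $\phi\ne0$ in the statement.

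The main obstacle is the backward continuation to $t=0$. Forward global existence is Proposition \ref{prop1}, but for the backward direction we must confirm that the $L^2$ theory of STEP 1 runs in reverse time. On $[0,T]$ the weight $e^{-a(p-1)s}$ is bounded above and below, so the standard mass-subcritical local theory, together with conservation of $\|v(t)\|_{L^2}$, controls the local existence time uniformly. It therefore suffices to iterate the local existence finitely many times.
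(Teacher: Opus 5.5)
Your proposal is correct, but it takes a different route from the paper.

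\textbf{The paper's route.} It works with $w=v-e^{it\Delta}\phi$ and runs the contraction in the weighted space with norm $\sup_{t\ge T}(t^{n/2}e^{at})^{p-1}\{\|w(t)\|_{L^2}+\|w\|_{L^q([t,\infty);L^r)}\}$. The source term $N(e^{is\Delta}\phi)$ is estimated in $L^1([t,\infty);L^2)$ through the dispersive bound (\ref{LpLq}). So the hypothesis $\phi\in L^{\frac{2p}{2p-1}}$ is already used in the construction, and the upper bound $\|v(t)-e^{it\Delta}\phi\|_{L^2}\lesssim (t^{-n/2}e^{-at})^{p-1}$ comes directly out of the fixed point.

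\textbf{Your route.} You do an unweighted contraction in $L^\infty L^2\cap L^qL^r$ on $[T,\infty)$, with smallness coming only from the factor $e^{-a(p-1)T}$ in Lemma \ref{Lem2}. You continue backward using mass conservation and identify $v$ with the solution of Proposition \ref{prop1} with $v_0=v(0)$. You then obtain both bounds from Theorem \ref{thm1}.

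\textbf{What each approach buys.} Your approach gives existence of the final-state solution for every $\phi\in L^2$, with the integrability assumption entering only at the rate stage. It also gives an explicit justification of the lower bound. The paper's appendix never writes the lower bound out: its contraction only yields the upper bound, and the lower bound implicitly requires Theorem \ref{thm1} or the decomposition (\ref{decomp1}) anyway. In exchange, the paper's route gives the upper rate without depending on the $R_1$, $R_2$, $R_3$ analysis.

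\textbf{The hypothesis $\phi\ne0$.} Your remark is right. With $\operatorname{Im}\lambda=0$, mass conservation gives $\|v(t)\|_{L^2}=\|\phi\|_{L^2}$ for all $t$, so $\phi=0$ forces $v\equiv0$ and the lower bound fails. The statement as written omits this hypothesis.
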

\begin{proof}
We deal with the following integral equation from the Duhamel principle:
\begin{align*}
v(t)=e^{it\Delta}\phi+i\int_{t}^{\infty}e^{-a(p-1)s}e^{i(t-s)\Delta} N(v(s))ds,
\end{align*}
 which implies 
\begin{align*}
w(t)=i\int_{t}^{\infty}e^{-a(p-1)s}e^{i(t-s)\Delta} [N(w(s)+e^{is\Delta}\phi)-N(e^{is\Delta}\phi)+N(e^{is\Delta}\phi)]ds
\end{align*}
by the transformation: $w(t)=v(t)-e^{it\Delta}\phi$.
Let the right hand side be $\Gamma w(t)$ and 
\begin{align*}
X(T,M):=\{ w\in C([0,\infty); L^2) : \|w\|_{X_T}\leq M\}
\end{align*}
with the equipped norm:
\begin{align*}
\|w\|_{X_T}:=\sup_{t\in[T,\infty)} (t^{n/2}e^{at})^{p-1} \{ \|w(t)\|_{L^2}+\|w\|_{L^q([t,\infty); L^r)} \},
\end{align*}
where $(q,r):=\left(\frac{4(p+1)}{n(p-1)}, p+1\right)$ is the admissible pair. 

We can decompose as follows:
\begin{align*}
N(w(s)+e^{is\Delta}\phi)-N(e^{is\Delta}\phi)=A_1(s)+A_2(s)
\end{align*}
with
\begin{align*}
|A_1(s)|\leq C_p |e^{is\Delta}\phi|^{p-1} |w(s)| \hspace{3mm}\text{and} \hspace{3mm} |A_2(s)|\leq C_p |w(s)|^p
\end{align*}
since
\begin{align*}
| N(w(s)+e^{is\Delta}\phi)-N(e^{is\Delta}\phi)| \leq C_{p} |e^{is\Delta}\phi|^{p-1} |w(s)|+|w(s)|^p.
\end{align*}
The Strichartz estimates yield
\begin{align*}
&\|\Gamma w(t)\|_{L^2}+ \|\Gamma w(t)\|_{L^q([t,\infty); L^r)}
\\
&\lesssim \|e^{-a(p-1)s}A_1\|_{L^{q'} ([t,\infty);L^{p'})}+\|e^{-a(p-1)s}A_2\|_{L^{q'} ([t,\infty);L^{p'})}+\| e^{-a(p-1)s} N(e^{is\Delta} \phi) \|_{L^1([t,\infty); L^2)}
\\
&=: I+I\!I+I\!I\!I.
\end{align*}
We estimate I. By the H\"{o}lder inequality and (\ref{def1}) we derive
\begin{align*}
I&\lesssim \left\|e^{-a(p-1)s}\| |e^{is\Delta}\phi|^{p-1} w(s)\|_{L^{r'}} \right\|_{L^{q'}[t,\infty)}
\\
&\lesssim \left\|e^{-a(p-1)s}\| |e^{is\Delta}\phi|^{p-1}\|_{L^{\frac{r}{p-1}}} \|w(s)\|_{L^{r}} \right\|_{L^{q'}[t,\infty)}
\\
&=\left\|e^{-a(p-1)s}\| e^{is\Delta}\phi\|_{L^{r}}^{p-1} \|w(s)\|_{L^{r}} \right\|_{L^{q'}[t,\infty)}
\\
&\lesssim \left\| e^{-a(p-1)s}\| e^{is\Delta}\phi\|_{L^{r}}^{p-1} \right\|_{L^{\alpha}[t,\infty)}  \|w\|_{L^q([t,\infty); L^r)}
\\
&=\left\| e^{-as}\| e^{is\Delta}\phi\|_{L^{r}} \right\|_{L^{\alpha}[t,\infty)}^{p-1}  \|w\|_{L^q([t,\infty); L^r)}
\\
&\lesssim \| e^{-as}\|_{L^{\beta}[t,\infty)}^{p-1} \|e^{is\Delta}\phi\|_{L^q([t,\infty); L^r)}^{p-1} \|w\|_{L^q([t,\infty); L^r)}
\\
&\lesssim e^{-a(p-1)t} \|e^{is\Delta}\phi\|_{L^q([t,\infty); L^r)}^{p-1} \|w\|_{L^q([t,\infty); L^r)},
\end{align*}
which and the Strichartz estimate give
\begin{align} \label{est1}
I\lesssim \left(e^{-2at}t^{-n/2}\right)^{p-1} \|\phi\|_{L^2}^{p-1} \|w\|_{X_T}.
\end{align}

Similarly we have
\begin{align} \label{est2}
I\!I &\lesssim \left\| e^{-a(p-1)s} \| |w(s)|^p \|_{L^{r'}} \right\|_{L^{q'}[t,\infty)} \notag
\\
&\lesssim \| e^{-a(1-\frac{1}{p})s} \|w(s)\|_{L^r} \|_{L^{pq'}[t,\infty)}^{p} \notag
\\
&\lesssim \| e^{-a(1-\frac{1}{p})s}\|_{L^{\gamma}}^p \|w\|_{L^{q}[t,\infty); L^r)}^p \notag
\\
&\lesssim e^{-at(p^2-p+1)}t^{-n(p^2-p)/2} \|w\|_{X_T}^p,
\end{align}
where $\gamma$ satisfies $\frac{1}{pq'}=\frac{1}{\gamma}+\frac{1}{q}$.

The $L^q$-$L^{q'}$ estimate (\ref{LpLq}) yields
\begin{align} \label{est3}
I\!I\!I &\lesssim \| e^{-a(p-1)s} N(e^{is\Delta} \phi) \|_{L^1([t,\infty); L^2)} \notag
\\
&=\| e^{-a(p-1)s} \|e^{is\Delta} \phi\|_{L^{2p}}^p  \|_{L^1[t,\infty)}  \notag
\\
&\lesssim \| e^{-a(p-1)s} s^{-n(p-1)/2} \|_{L^1[t,\infty)} \|\phi\|_{L^{(2p)'}}^p  \notag
\\
&\lesssim e^{-a(p-1)t} t^{-n(p-1)/2} \|\phi\|_{L^{\frac{2p}{2p-1}}}^p
\end{align}
Therefore by (\ref{est1}), (\ref{est2}) and (\ref{est3}), we derive
\begin{align*}
\|\Gamma w\|_{X_T}\leq C e^{-aT} \left(\|\phi\|_{L^2}^{p-1} \|w\|_{X_T}+\|w\|_{X_T}^p\right) +C \|\phi\|_{L^{\frac{2p}{2p-1}}}^p 
\end{align*}
with some constant $C>0$.
Suppose that $w\in X(T, M)$.
By putting $M=2C \|\phi\|_{L^{(2p)'}}^{p}$ and choosing $T>0$ such as $Ce^{-aT}(\|\phi\|_{L^2}^{p-1}+M^{p-1})<1/2$, we have
\begin{align*}
\|\Gamma w\|_{X_T}< M/2 +M/2=M
\end{align*} 
Similarly, we obtain
\begin{align*}
\|\Gamma w_1 - \Gamma w_2\|_{X_T}< \frac{1}{2} \|w_1-w_2\|_{X_T}.
\end{align*}
Thus $\Gamma$ is a contraction map on $X(T, M)$.
Moreover we have the $L^2$-conservation law, which enables us to utilize the usual argument of construction for a local solution (see Y. Tsutsumi \cite{YTsutsumi2}).
Consequently,  we can extend the solution $v\in C([T,\infty); L^2)$ to $v\in C([0,\infty); L^2)$.
\end{proof}

\section{Alternative proof for Theorem \ref{thm2}}
Next we give an alternative proof for Theorem \ref{thm2}.   
\begin{proof}
Thanks to Proposition \ref{prop1}, the Cauchy problem (\ref{CP1}) has a global solution.
It suffices to prove a contradiction if $\displaystyle\lim_{t\to \infty} \|v(t)\|_{L^2}=0$ since the quantity $\|v(t)\|_{L^2}$ is monotone decreasing.  
We have by the Duhamel principle
\begin{align*}
v(t)=i  \int_{t}^{\infty} e^{i(t-s)\Delta}e^{-a(p-1)s} N(v(s)) ds.
\end{align*}
Applying the Strichartz estimates and Lemma \ref{Lem2} to this equality, we obtain 
\begin{align} \label{inq61}
\|v\|_{X(t)}\lesssim e^{-a(p-1)t} \|v\|_{X(t)}^p,
\end{align}
where we write $X(t):=L^{\infty}([t,\infty) ; L^2) \cap L^{q}([t,\infty); L^r)$ and $(q,r):=(\frac{4(p+1)}{n(p-1)}, p+1)$.
There exists a constant $T_0>1$ such that $\|v\|_{X(T_0)}=0$ by (\ref{inq61}) and a continuous argument. 
Noting that $\|\cdot\|_{X(t)}$ is monotone decreasing in $t$, we define
\begin{align*}
T_1:=\inf\{ t\in [0,\infty): \|v\|_{X(t)}=0\}.
\end{align*}
By the standard argument of local solutions, there exist a constant $\delta>0$ and a solution $v\in C([T_1-\delta, T_1]; L^2)$ to (\ref{CP1}) with the initial data $v_0=v(T_1)=0$, which derives
\begin{align*}
\| v(T_1-\delta) \|_{L^2}=0.
\end{align*}
This contradicts the definition of $T_1$.
\end{proof}

\textbf{Data Availability Statement} No data were generated in this study.
\vskip\baselineskip
\textbf{Conflict of interests} The authors declare that there is no conflict of interests
regarding the publication of this paper.
\vskip\baselineskip
\textbf{Acknowledgements} The first author is supported by KAKENHI Grant-in-Aid for Early-Career Scientists No. JP24K16947 and partially by KAKENHI Grant-in-Aid for Scientific Research (B) No. JP26K00612.

\vskip\baselineskip
\address{Department of Mathematics, Graduate School of Science, The University of Osaka, Toyonaka,  Osaka 560-0043, Japan}
\email{inui@math.sci.osaka-u.ac.jp}

\vskip\baselineskip

\address{Department of Mathematics, Faculty of Science, Tokyo University of Science, Kagurazaka 1-3, Shinjuku-ku, Tokyo 162-8601, Japan}
\email{1123703@ed.tus.ac.jp}

\end{document}